 \numberwithin{equation}{section}
\newcommand\eps{{\varepsilon}}
\newcommand{\Rr}{{\mathbb R}}
\newcommand{\vfi}{\varphi}
\newcommand{\tvfi}{\tilde\varphi}
\newcommand{\be}{\begin{equation}}
\newcommand{\ee}{\end{equation}}
 \title[Weak formulations of the water waves equations]{Equivalence of weak formulations of the steady water waves equations}
 \date{\today}
\author{Eugen Varvaruca}
\address{Department of Mathematics and Statistics, University of Reading, Whiteknights Campus, PO Box 220, Reading RG6 6AX, United Kingdom}
\email{e.varvaruca@reading.ac.uk}
 \author{Arghir Zarnescu}
 \address{Mathematical Institute, University of Oxford, 24--29 St Giles', Oxford OX1 3LB, United Kingdom}
 \email{zarnescu@maths.ox.ac.uk}
 \newtheorem{lemma} {Lemma}
 \newtheorem{theorem} {Theorem}
\begin{document}
 \begin{abstract} We prove the equivalence of three weak formulations of the steady water waves equations, namely the velocity formulation, the stream function formulation, and the Dubreil-Jacotin formulation, under weak H\"{older} regularity assumptions on their solutions.
 \end{abstract}

 \maketitle
 \section{Introduction}

  We consider the classical problem of travelling waves at the free surface of a two-dimensional inviscid, incompressible, heavy fluid  over a flat bed. Over the past few years this problem has attracted considerable interest, see \cite{S} for a survey of recent developments.
In the most direct mathematical description, the problem is to find steady solutions of the incompressible Euler system for the velocity field $(u,v)$ and the pressure field $P$, together with relevant boundary conditions, in an unknown domain in the plane. In this form the problem is difficult to treat mathematically, and other, more convenient reformulations, have been used in the literature. One such reformulation involves a stream function $\psi$, whose existence is ensured by the incompressibility condition. Namely, the stream function $\psi$ satisfies a semilinear elliptic equation (since vorticity may be present in the flow), together with suitable boundary conditions. This reformulation still retains the difficulty of being a free-boundary problem. The most general approach to overcoming this difficulty  uses a change of variables introduced by Dubreil-Jacotin in 1934, which transforms the problem to an equivalent one in a strip. The Dubreil-Jacotin formulation is a cornerstone of the large and growing literature on large-amplitude travelling water waves with vorticity that began with the work of Constantin and Strauss \cite{csold}.
(For other methods, of more restricted applicability, of transforming the free-boundary problem for the stream function into a problem over a fixed domain, see
 \cite{CV}, \cite{EEW}, \cite{ST}.)

As we discuss in Section 2, following essentially \cite{csold}, the three formulations of the steady water waves equations just described, to which we refer to as the \emph{velocity formulation}, the \emph{stream function formulation} and the \emph{Dubreil-Jacotin (or height) formulation}, are equivalent when considered in the classical sense. However, the recent study \cite{cs} by Constantin and Strauss  has raised the question whether the three formulations are still equivalent when considered in a suitable weak sense which we describe in Section 3. The main result of \cite{cs} is a global bifurcation theory for solutions, periodic in the horizontal direction, of class $C^{1,\alpha}$ for some $\alpha\in (0,1)$ of the weak Dubreil-Jacotin equation. Such solutions would formally correspond to solutions $(u,v)$ and $P$, also periodic in the horizontal direction, and  of class $C^{0,\alpha}$, of the weak velocity formulation, in a domain with $C^{1,\alpha}$ boundary. However, the key question of the equivalence of the two weak formulations under these regularity assumptions is left as an open problem by \cite{cs}. (A result on the equivalence of the formulations is given in \cite{cs}, but under different, and not the most natural, regularity assumptions.)

 The  main result of the present paper, Theorem 1, which is given in Section 4, provides an affirmative answer to the above open problem, albeit only in the case when the H\"{o}lder exponent satisfies $\alpha >1/3$. More precisely, Theorem 1 proves that, under the above regularity assumptions, the weak velocity and the weak stream function formulations are equivalent for any $\alpha\in (1/3,1]$, while the weak stream function and the weak height formulations are equivalent for any $\alpha\in (0,1]$. An important consequence of our result is that, at least in the case $\alpha>1/3$, the solutions constructed in \cite{cs} of the Dubreil-Jacotin formulation are relevant (in the sense that they give rise to corresponding solutions) for the velocity formulation of the steady water waves equations.
Our result is in the same spirit as, and its proof is inspired by, the Onsager conjecture as proved (partially) in \cite{CET}. The Onsager conjecture is, essentially, the statement that solutions of the time-dependent incompressible Euler equations on a fixed domain (in dimension three, with no external forces), of class $C^{0,\alpha}$ in the space variables for each value of the time variable, conserve their energy in time if $\alpha>1/3$ and may fail to do so if $\alpha\leq 1/3$. The paper \cite{CET} proves that $\alpha>1/3$ implies conservation of energy (and leaves open the reverse statement in the conjecture). As in \cite{CET}, our proof is based on regularizing the equations and, roughly speaking, the assumption $\alpha>1/3$ is used in an essential way to show that certain remainder terms converge to $0$ as the regularization parameter tends to $0$.
An important problem left open by the present paper is that of whether the weak velocity formulation and the weak stream function formulation are also equivalent in the case when the H\"{o}lder exponent satisfies $\alpha\leq 1/3$.

 \section{Classical formulations of the steady water waves problem}
\subsection{The velocity formulation}
  We consider a wave travelling with constant speed and without change of shape on the free surface of a two-dimensional inviscid, incompressible fluid of unit density, acted on by gravity, over a flat, horizontal, impermeable bed. This means that, in a frame of reference moving at the speed $c$ of the wave, the fluid is in steady flow in a fixed domain. Let the free surface be given by $y=\eta(x)$, for some function $\eta:\Rr\to\Rr$, and the flat bottom be given by $y=0$, so that the fluid domain is
  $$D_\eta\stackrel{\rm def}{=}\{(x,y):x\in\mathbb{R},0<y<\eta(x)\}.$$
  Then the fluid motion is described, see \cite{csold} for details, by the following equations and boundary conditions for a steady velocity field $(u,v)$ and a pressure field $P$ in $D_\eta$:
 \begin{subequations}\label{Euler}
  \begin{align}
 u_x+v_y=0 \,\,\,\,\quad\qquad\qquad\qquad&\text{in }D_\eta,\label{eq:masscons}\\
 (u-c)u_x+vu_y=-P_x\,\qquad\qquad\qquad&\text{in }D_\eta,\label{eq:momcons1}\\
  (u-c)v_x+vv_y=-P_y-g\,\,\qquad\qquad&\text{in }D_\eta, \label{eq:momcons2}\\
  v=0 \,\,\,\quad\qquad\qquad\qquad&\text{on } y=0,\label{vb}\\
 v=(u-c)\eta_x  \qquad\qquad&\text{on } y=\eta(x),  \label{eq:velbdry}\\
  P=P_{\textnormal{atm}} \qquad\qquad\qquad&\text{on }y=\eta(x),\label{pres}
 \end{align}
  \end{subequations}
where $P_{\textnormal{atm}}$ is the constant atmospheric pressure and $g$ is the gravitational constant of acceleration. More precisely, (\ref{eq:masscons}) describes mass conservation, (\ref{eq:momcons1})--(\ref{eq:momcons2}) describe momentum conservation, (\ref{vb}) expresses the fact that the velocity at the bottom is horizontal, (\ref{eq:velbdry}) is the kinematic condition that the same particles always form the free surface, while (\ref{pres}) is the dynamic condition that
at the free surface the pressure in the fluid equals the constant atmospheric pressure. This is a free-boundary problem, because the domain $D_\eta$ is not known a priori. The system (\ref{Euler}) will be referred to as the \emph{velocity formulation} of the steady water waves equations. Throughout the paper we assume that, in the moving frame, the horizontal velocity of all the particles is in the same direction. For definiteness, we assume that
\be u<c\quad\text{in }\overline{D_\eta}.\label{ven}\ee
(All the results discussed in the paper have corresponding analogues if instead of (\ref{ven}) one assumes that $u>c$ in $\overline{D_\eta}$.)

 For the remainder of this section we describe informally, following \cite{csold}, two other equivalent formulations of (\ref{Euler}), assuming that the solutions are smooth enough. The equivalence of these formulations under weak regularity assumptions is the main aim of the paper, which will be addressed in the subsequent sections.

\subsection{The stream function formulation}

Suppose that (\ref{Euler}) and (\ref{ven}) hold. Equation (\ref{eq:masscons}) implies the existence of a function $\psi$ in $\overline{D_\eta}$, called a (relative) stream function, such that
 \begin{equation}
  \psi_y=u-c,\,\,\psi_x=-v \quad\text{in }D_\eta.
  \label{transform:stream}
  \end{equation}
  The boundary conditions (\ref{vb}) and (\ref{eq:velbdry}) imply that $\psi$ is a constant on each of $y=0$ and $y=\eta(x)$. Since $\psi$ is only determined up to an additive constant, one can assume that $\psi=0$ on $y=\eta(x)$, and then we obtain that there exists a constant $p_0$ such that $\psi=-p_0$ on $y=0$.
The condition (\ref{ven}) can be rewritten as
\be \psi_y<0\quad\text{in }\overline{D_\eta},\label{aq}\ee
a consequence of which is that $p_0<0$. After expressing the left-hand side in (\ref{eq:momcons1}) and (\ref{eq:momcons2}) in terms of $\psi$, differentiation of the first of these equations with respect to $y$ and of the second with respect to $x$ allows us to eliminate the pressure, leading to
  \be(\Delta\psi)_x\psi_y=(\Delta\psi)_y\psi_x\quad\text{in }D_\eta,\label{vrt}\ee
  where $\Delta$ denotes the Laplace operator. Note that (\ref{aq}) shows that all the level sets of $\psi$ are graphs over the $x$ coordinate, and (\ref{vrt}) then implies that $\Delta\psi$ is constant on each level set of $\psi$. Thus there exists a function $\gamma:[0,-p_0]\to\Rr$ such that
  \[ -\Delta\psi=\gamma(\psi) \quad\text{in }D_\eta.\]
   (Since the quantity $\omega\stackrel{\rm def}{=}v_x-u_y=-\Delta\psi$ has a physical interpretation as the vorticity of the flow, the function $\gamma$ is  customarily referred to in the literature as the vorticity function.)
   Let\begin{equation}
 \Gamma(p)\stackrel{\rm def}{=}\int_0^p \gamma(-s)\,ds\quad\text{for all }p\in [p_0,0].
 \end{equation}
It is then easy to verify, using (\ref{eq:momcons1})--(\ref{eq:momcons2}), that
\be P+\frac{1}{2}|\nabla\psi|^2+gy-\Gamma(-\psi)=\text{constant}\quad\text{in }D_\eta.\label{bec}\ee
In view of (\ref{pres}) and the fact that $\psi=0$ on $y=\eta(x)$, it follows that
\[|\nabla \psi|^2+2gy=Q \quad\text{on }y=\eta(x),\]
for some constant $Q$. We have therefore obtained the \emph{stream function formulation} of the steady water waves equations, which is to find a domain $D_\eta$ and a function $\psi$ in $D_\eta$ such that
  \begin{subequations}\label{eq:streamform}
  \begin{align}
  \Delta\psi=-\gamma(\psi)\qquad\qquad &\textrm{ in }D_\eta,\\
  \psi=-p_0 \,\quad\qquad\qquad&\textrm{ on }y=0,\\
  \psi=0 \,\,\qquad\qquad\qquad&\textrm{ on } y=\eta(x),\\
  |\nabla \psi|^2+2gy=Q \qquad\qquad\qquad&\textrm{ on }y=\eta(x),
  \end{align}
\end{subequations} for some constants $p_0<0$ and $Q$, and some function $\gamma:[0,-p_0]\to\Rr$.

Conversely, suppose that $\psi$ satisfies (\ref{eq:streamform}) and (\ref{aq}) in a domain $D_\eta$. Then one can define in $D_\eta$ a velocity field $(u,v)$ by (\ref{transform:stream}) and a pressure field $P$ by (\ref{bec}) with a suitable choice of the constant in the right-hand side, and easily check that (\ref{Euler}) and (\ref{ven}) hold.

  \subsection{The height (or Dubreil-Jacotin) formulation} An elegant way to overcome the difficulty that in (\ref{eq:streamform}) the fluid domain $D_\eta$ needs to be found as part of the solution has been first observed by Dubreil-Jacotin: the fact that $\psi$ is constant on the top and the bottom of $D_\eta$ can be used to transform (\ref{eq:streamform}) into a nonlinear elliptic boundary-value problem in a fixed domain. More precisely, suppose that (\ref{eq:streamform}) and (\ref{aq}) hold, and let us consider the partial hodograph (or semi-Lagrangian) mapping
   \be(x,y)\mapsto (q,p)=(x, -\psi(x,y)),\label{map1}\ee
 which is, as a consequence of (\ref{aq}), a bijection between $\overline{D_\eta}$ and the closure of the strip \[R=\{(q,p):q\in\Rr, p\in (p_0,0)\}.\]
 Then the inverse mapping, from $\overline R$ to $\overline{D_\eta}$, necessarily has the form
 \be (q,p)\mapsto (x,y)=(q, h(q,p)),\label{map2}\ee
 for some function $h:\overline R\to \Rr$. More precisely, the following two relations hold:
 \be-\psi(q, h(q,p))=p\text{ for all }(q,p)\in R,\qquad  h(x, -\psi(x,y))=y\text{ for all }(x,y)\in D_\eta.\label{inverse}\ee
 (These relations show that, for each $(q,p)\in R$, one may interpret $h(q,p)$ as the \emph{height} of the streamline $\psi=-p$ above the point $(q,0)$ on the bed.) The condition (\ref{aq}) can be expressed as
 \begin{equation}
 h_p>0\quad\text{in }\overline{R}.
 \label{eq:nondegheight}
 \end{equation}
 Note also that
 \begin{equation}h_q=-\frac{\psi_x}{\psi_y},\,h_p=-\frac{1}{\psi_y},\qquad\,\psi_x=\frac{h_q}{h_p},\,\psi_y=-\frac{1}{h_p},\label{ch1}
 \end{equation}
 and
\begin{equation}\label{ch2}
 \partial_x=\partial_q-\frac{h_q}{h_p}\partial_p,\,\partial_y=\frac{1}{h_p}\partial_p,\qquad\,\partial_q=\partial_x-\frac{\psi_x}{ \psi_y}\partial_y,\,\partial_p=-\frac{1}{\psi_y}\partial_y.
 \end{equation}
 Using these identities, one can easily reformulate (\ref{eq:streamform}) as the following system for the function $h$ defined above:
 \begin{subequations} \label{eq:height}
 \begin{align}
 (1+h_q^2)h_{pp}-2h_qh_p h_{qp}+h^2_ph_{qq}=-\gamma(-p)h^3_p\,\,\qquad\qquad &\textrm{ in }R,\\
  h=0 \,\qquad\qquad\qquad\qquad&\textrm{ on }p=p_0,\\
  1+h_q^2+(2gh-Q)h_p^2=0\qquad\qquad\qquad\qquad &\textrm{ on }p=0.
  \end{align}
  \end{subequations}
  This is the \emph{height (or Dubreil-Jacotin) formulation} of the steady water waves equations.

 Conversely, suppose that $h$ satisfies (\ref{eq:height}) and (\ref{eq:nondegheight}). Let $\eta:\Rr\to\Rr$ be given by
 $\eta(q)=h(q,0)$ for all $q\in\Rr$. Then (\ref{eq:nondegheight}) implies that $(q,p)\mapsto (x,y)=(q, h(q,p))$ is a bijection between $\overline R$ and $\overline{D_\eta}$. Defining $\psi$ by (\ref{inverse}), the formulae (\ref{ch1})--(\ref{ch2}) are valid, and one can easily deduce from (\ref{eq:height}) and (\ref{eq:nondegheight}) that (\ref{eq:streamform}) and (\ref{aq}) hold.

\section{Weak formulations of the steady water waves problem}

\subsection{Weak velocity formulation} For sufficiently smooth functions $\eta$, $u, v,$ and $P$, (\ref{Euler}) is easily seen to be equivalent to
\begin{subequations}\label{wEuler}
  \begin{align}
 (u-c)_x+v_y=0 \,\,\,\,\quad\qquad\qquad\qquad&\text{in }D_\eta,\label{weq:masscons}\\
 ((u-c)^2)_x+((u-c)v)_y=-P_x\,\qquad\qquad\qquad&\text{in }D_\eta,\label{weq:momcons1}\\
  ((u-c)v)_x+(v^2)_y=-P_y-g\,\,\qquad\qquad&\text{in }D_\eta, \label{weq:momcons2}\\
  v=0 \,\,\,\quad\qquad\qquad\qquad&\text{on } y=0,\label{wvb}\\
 v=(u-c)\eta_x  \qquad\qquad&\text{on } y=\eta(x),  \label{weq:velbdry}\\
  P=P_{\textnormal{atm}} \qquad\qquad\qquad&\text{on }y=\eta(x).\label{wpres}
 \end{align}
  \end{subequations}
  However, (\ref{wEuler}) may be given a meaning for functions of weaker regularity than those of (\ref{Euler}), namely by interpreting (\ref{weq:masscons})--(\ref{weq:momcons2}) in the sense of distributions.
  Of particular interest for us will be solutions of  (\ref{wEuler}) with $\eta\in C^{1,\alpha}(\Rr)$ and
  $(u,v,P)\in C^{0,\alpha}(\overline{D_\eta})$ for some $\alpha\in (0,1]$, with (\ref{wvb})--(\ref{wpres}) being satisfied in the classical sense, and (\ref{weq:masscons})--(\ref{weq:momcons2}) being satisfied in the sense of distributions. (Under the same regularity assumptions, it is not clear how to give a meaning directly to (\ref{Euler}), because the multiplication of a distribution by a function of finite differentiability is not well defined.)

 \subsection{Weak stream function formulation}

For sufficiently smooth functions $\psi$ and $\gamma$, the algebraic identity
\[(\psi_x\psi_y)_x-\frac{1}{2}(\psi_x^2-\psi_y^2)_y-(\Gamma(-\psi))_y=\psi_y(\Delta\psi+\gamma(\psi))\]
shows that, in the presence of (\ref{aq}), (\ref{eq:streamform}) is equivalent to
 \begin{subequations}\label{weq:streamform}
  \begin{align}
  (\psi_x\psi_y)_x-\frac{1}{2}(\psi_x^2-\psi_y^2)_y-(\Gamma(-\psi))_y=0\,\,\qquad\qquad\qquad &\textrm{ in }D_\eta,\label{ioq}\\
  \psi=-p_0 \,\quad\qquad\qquad&\textrm{ on }y=0,\label{p1}\\
  \psi=0 \,\,\qquad\qquad\qquad&\textrm{ on } y=\eta(x),\label{p2}\\
  |\nabla \psi|^2+2gy=Q \qquad\qquad\qquad&\textrm{ on }y=\eta(x).\label{p3}
  \end{align}
\end{subequations}
Again, (\ref{ioq}) may be required to hold in the sense of distributions. We will be interested in solutions of (\ref{weq:streamform}) with
$\eta\in C^{1,\alpha}(\Rr)$, $\psi\in C^{1,\alpha}(\overline{D_\eta})$ and $\Gamma\in C^{0,\alpha}([p_0,0])$ for some $\alpha\in (0,1]$, with (\ref{p1})--(\ref{p3}) being satisfied in the classical sense, and (\ref{ioq}) being satisfied in the sense of distributions (with $\psi_x, \psi_y$ being understood in the classical sense).

\subsection{Weak height formulation}
For sufficiently smooth functions $h$ and $\gamma$, the algebraic identity
\[\left\{-\frac{1+h_q^2}{2h_p^2}+\Gamma(p)\right\}_p+\left\{\frac{h_q}{h_p}\right\}_q=\frac{1}{h_p^3}
\left\{(1+h_q^2)h_{pp}-2h_qh_p h_{qp}+h^2_ph_{qq}+\gamma(-p)h^3_p\right\}\]
shows that, in the presence of (\ref{eq:nondegheight}), (\ref{eq:height}) is equivalent to
\begin{subequations}
 \label{wehe}
 \begin{align}
  \left\{-\frac{1+h_q^2}{2h_p^2}+\Gamma(p)\right\}_p+\left\{\frac{h_q}{h_p}\right\}_q=0\qquad\qquad\qquad&\text{in }R,\label{w1}\\
h=0\qquad\qquad\qquad&\text{on }p=p_0,\label{w2}\\
  \frac{1+h_q^2}{2h_p^2}+gh-\frac{Q}{2}=0\qquad\qquad\qquad&\text{on }p=0.\label{w3}
  \end{align}
 \end{subequations}
 We will be interested in solutions of (\ref{wehe}) with $h\in C^{1,\alpha}(\overline R)$ and $\Gamma\in C^{0,\alpha}([p_0,0])$ for some $\alpha\in (0,1]$,
 with (\ref{w2})--(\ref{w3}) being satisfied in the classical sense, and (\ref{w1}) being satisfied in the sense of distributions (with $h_p,h_q$ understood
 in the classical sense).

 \section{The equivalence of the weak formulations}

 Weak solutions (in the sense described in the previous section) of the steady water waves problem have been studied only very recently in \cite{cs}.
 That paper deals with waves which are periodic in the horizontal direction, the subscript $per$ being used in what follows to indicate this periodicity requirement. In \cite{cs} the authors develop a global bifurcation theory for weak solutions of (\ref{wehe}) with $h\in C^{1,\alpha}_{\textnormal{per}}(\overline R)$, under the assumption $\Gamma\in C^{0,\alpha}([p_0,0])$, for some $\alpha\in (0,1)$. These would formally correspond to solutions of the weak velocity formulation with $\eta\in C^{1,\alpha}_{\textnormal{per}}(\Rr)$ and $u,v,P\in C^{0,\alpha}_{\textnormal{per}}(\overline{D_\eta})$. However, no rigorous proof of this equivalence is given in \cite{cs}. The only result there on the equivalence of the weak formulations, see
 \cite[Theorem 2]{cs}, is the following:

   {\it Let $0<\alpha<1$ and $r=\frac{2}{1-\alpha}$. Then the following are equivalent:
\begin{itemize}
\item[(i)] the weak velocity formulation (\ref{wEuler}) together with (\ref{ven}), for $\eta\in C^{1,\alpha}_{\textnormal{per}}(\mathbb{R})$ and $u,v,P\in W^{1,r}_{\textnormal{per}}(D_\eta)\subset C^{0,\alpha}_{\textnormal{per}}(\overline{D_\eta})$;
 \item[(ii)] the stream function formulation (\ref{eq:streamform}) together with (\ref{aq}), for $\gamma\in L^r[0,-p_0]$, $\eta\in C^{1,\alpha}_{\textnormal{per}}(\mathbb{R})$ and $\psi\in W^{2,r}_{\textnormal{per}}(D_\eta)\subset C^{1,\alpha}_{\textnormal{per}}(\overline{D_\eta})$;
 \item[(iii)] the weak height formulation (\ref{wehe}) together with (\ref{eq:nondegheight}), for $\Gamma\in W^{1,r}[p_0,0]$ and $h\in W^{2,r}_{\textnormal{per}}(R)\subset C^{1,\alpha}_{\textnormal{per}}(\overline R)$.
\end{itemize}}
 As one can see, in the above result the velocity field $(u,v)$, the pressure $P$, the stream function $\psi$, the height $h$, and the function $\Gamma$, are assumed to have more regularity, namely an additional weak (Sobolev space) derivative, than one would really like.

Our main result, given below, proves the equivalence of the weak formulations under the `right' regularity assumptions, albeit only for the case when the H\"{o}lder exponent satisfies $\alpha\in (1/3, 1]$. (In particular, under our assumptions, the function $\Gamma$ need not have a (weak) derivative.) While the weak stream function and the weak height formulations will be seen to be, in fact, equivalent for any $\alpha\in (0,1]$, it remains an open problem whether the weak velocity and the weak stream function formulations are equivalent for $\alpha\in (0,1/3]$ also.
For simplicity, we state our result for solutions which are periodic in the horizontal direction, though this assumption is not essential, and the result can be easily extended to cover other important situations, for example that of solitary waves \cite{Hur}.

\begin{theorem}\label{tmain}
 Let $\alpha\in (1/3, 1]$. Then the following are equivalent:
\begin{itemize}
\item[(i)] the weak velocity formulation (\ref{wEuler}) together with (\ref{ven}), for $\eta\in C^{1,\alpha}_{\textnormal{per}}(\mathbb{R})$ and $u,v,P\in C^{0,\alpha}_{\textnormal{per}}(\overline{D_\eta})$;
 \item[(ii)] the weak stream function formulation (\ref{weq:streamform}) together with (\ref{aq}), for $\Gamma\in C^{0,\alpha}([p_0,0])$, $\eta\in C^{1,\alpha}_{\textnormal{per}}(\mathbb{R})$ and $\psi\in C^{1,\alpha}_{\textnormal{per}}(\overline{D_\eta})$;
 \item[(iii)] the weak height formulation (\ref{wehe}) together with (\ref{eq:nondegheight}), for $\Gamma\in C^{0,\alpha}([p_0,0])$ and $h\in C^{1,\alpha}_{\textnormal{per}}(\overline R)$.
\end{itemize}
\end{theorem}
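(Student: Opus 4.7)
The plan is to prove the two equivalences (ii)$\Leftrightarrow$(iii) and (i)$\Leftrightarrow$(ii) separately. The first holds for every $\alpha\in(0,1]$ and is essentially a change of variables; the second is the substance of the theorem and is where the threshold $\alpha>1/3$ enters, via a Constantin--E--Titi style regularization in the spirit of \cite{CET}.

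For (ii)$\Leftrightarrow$(iii) I would first use the inverse function theorem together with the uniform lower bound on $|\psi_y|$ (equivalently the uniform positivity of $h_p$ on each period, which holds by periodicity and continuity) to check that (\ref{map1}) is a $C^{1,\alpha}$-diffeomorphism from $\overline{D_\eta}$ onto $\overline R$. Under this diffeomorphism, the identities (\ref{ch1})--(\ref{ch2}) and the algebraic identity displayed immediately before (\ref{wehe}) hold pointwise between $C^{0,\alpha}$ quantities. The distributional equations (\ref{ioq}) and (\ref{w1}) are then equivalent via a test-function argument: for $\phi\in C^\infty_c(R)$ the pullback $\tilde\phi(x,y):=\phi(x,-\psi(x,y))$ belongs to $C^{1,\alpha}_c(D_\eta)$, which is admissible for testing first-order distributions with $C^{0,\alpha}$ coefficients, and the change of variables in the ensuing integral identity produces (\ref{w1}). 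The boundary conditions transfer classically.

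For (i)$\Rightarrow$(ii) I would construct $\psi\in C^{1,\alpha}(\overline{D_\eta})$ by line integration of $(u-c,-v)$, using (\ref{weq:masscons}) and the simple connectedness of $D_\eta$, normalized so that $\psi=0$ on $y=\eta$; condition (\ref{aq}) then follows from (\ref{ven}). The key task is to produce $\Gamma\in C^{0,\alpha}([p_0,0])$ such that the Bernoulli quantity $E:=P+\tfrac12|\nabla\psi|^2+gy$ equals $\Gamma(-\psi)$ up to an additive constant. I would fix a standard even mollifier $\rho_\eps$ and work in an interior subdomain. Convolving the weak momentum equations with $\rho_\eps$, expanding each quadratic nonlinearity via the commutator $r_\eps(f,g):=\rho_\eps*(fg)-f_\eps g_\eps$, and computing the directional derivative of the mollified Bernoulli quantity $E_\eps$ along $\nabla\psi_\eps^\perp$ yields a pointwise identity of the form ``directional derivative $=$ remainder'', where each commutator is of size $O(\eps^{2\alpha})$ in $C^0$. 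Pairing with a test function and integrating the divergence by parts costs only one factor of $1/\eps$ (since $\|\partial_j\psi_\eps\|_{C^0}$ is bounded while $\|\partial_j\partial_k\psi_\eps\|_{C^0}=O(\eps^{\alpha-1})$), so the resulting distributional pairing is $O(\eps^{3\alpha-1})$ and tends to zero precisely when $\alpha>1/3$. In the limit one obtains $\nabla E\parallel\nabla\psi$ in $\mathcal D'$; combined with $\psi_y<0$ this says $E$ depends only on $\psi$, and $\Gamma$ is defined accordingly. Equation (\ref{ioq}) then follows from (\ref{weq:momcons2}) by direct substitution of the definitions.

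For (ii)$\Rightarrow$(i) I would set $u=\psi_y+c$, $v=-\psi_x$, $P=\text{const}-\tfrac12|\nabla\psi|^2-gy+\Gamma(-\psi)$; then (\ref{weq:masscons}) and (\ref{wvb})--(\ref{wpres}) are immediate, and a short distributional computation shows that the second weak momentum equation (\ref{weq:momcons2}), rewritten in $\psi$-variables, is exactly (\ref{ioq}). The main obstacle, and the heart of the proof, is the first weak momentum equation (\ref{weq:momcons1}), which after substitution becomes the ``dual'' divergence identity
\begin{equation*}
\tfrac12(\psi_y^2-\psi_x^2)_x-(\psi_x\psi_y)_y+(\Gamma(-\psi))_x=0\quad\text{in }\mathcal D'(D_\eta),
\end{equation*}
corresponding in the smooth case to $\psi_x(\Delta\psi+\gamma(\psi))=0$ whereas (\ref{ioq}) corresponds to $\psi_y(\Delta\psi+\gamma(\psi))=0$. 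One cannot simply divide by $\psi_y$ to deduce one from the other, since $\Delta\psi$ is genuinely a distribution. My plan is instead to convolve (\ref{ioq}) with $\rho_\eps$ and use product-commutator identities to rewrite the result as $\psi_{\eps,y}\Delta\psi_\eps-(\rho_\eps*\Gamma(-\psi))_y+R_\eps=0$, divide by the uniformly negative $\psi_{\eps,y}$, multiply by $\psi_{\eps,x}$, and reassemble into the dual divergence form. The resulting error has two pieces: the weighted commutator remainder $(\psi_{\eps,x}/\psi_{\eps,y})R_\eps$, and a ``Jacobian commutator'' $\psi_{\eps,x}(\rho_\eps*\Gamma(-\psi))_y-\psi_{\eps,y}(\rho_\eps*\Gamma(-\psi))_x$ expressing the failure of $\rho_\eps*\Gamma(-\psi)$ to be a function of $\psi_\eps$. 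The second piece is controlled by first establishing the exact weak identity $(\psi_x\Gamma(-\psi))_y-(\psi_y\Gamma(-\psi))_x=0$ in $\mathcal D'$ (proved by introducing an antiderivative $F$ with $F'=\Gamma$, so that the Jacobian is the divergence of a $C^{1,\alpha}$ field); this reduces the Jacobian commutator to the derivatives of two product commutators of order $\eps^{2\alpha}$. Both pieces of the remainder then vanish in $\mathcal D'$ at rate $O(\eps^{3\alpha-1})$ whenever $\alpha>1/3$, and passing to the limit produces the dual identity. The technical heart of the argument is thus the CET-type bookkeeping of these cubic commutator remainders, which is exactly where the Onsager exponent $1/3$ appears.
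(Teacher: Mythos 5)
Your proposal is correct and follows essentially the same route as the paper: the equivalence (ii)$\Leftrightarrow$(iii) via the Dubreil--Jacotin change of variables (valid for every $\alpha\in(0,1]$), and the equivalence (i)$\Leftrightarrow$(ii) via mollification of the divergence-form equations, the commutator identity $(fg)^\eps=f^\eps g^\eps+R^\eps(f,g)$, and CET-type estimates in which the cubic remainder terms are $O(\eps^{3\alpha-1})$ after one integration by parts, vanishing exactly for $\alpha>1/3$. The only noteworthy difference is how you establish the exact Jacobian identity $(\psi_x\Gamma(-\psi))_y-(\psi_y\Gamma(-\psi))_x=0$ in $\mathcal D'$ in the step (ii)$\Rightarrow$(i): you introduce an antiderivative $F$ of $\Gamma$ so that $\Gamma(-\psi)\nabla\psi=-\nabla\bigl(F(-\psi)\bigr)$ with $F(-\psi)\in C^{1,\alpha}$, and then the identity is the symmetry of mixed distributional derivatives; the paper instead pulls the identity back to the strip $R$ via the diffeomorphism $(q,p)=(x,-\psi)$, where it becomes $\int_R\Gamma(p)\,\tilde\varphi_q\,dq\,dp=0$, which is trivial by compact support. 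Your antiderivative argument is slightly more self-contained (it does not lean on the (ii)$\Leftrightarrow$(iii) machinery), but both are one-line observations and the overall strategy and estimates are the same.
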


A key ingredient in our proof is regularization of the relevant equations.
Therefore, we start with some background results on regularization, valid in any number $d$ of dimensions, where we use $x$, $y$ and $z$ to denote points in $\Rr^d$. Let $\varrho\in C_0^\infty(\mathbb{R}^d)$ be a given function, such that\[\text{  $\varrho\geq 0$ in $\Rr^d$, $\textrm{supp}\,\varrho\subset B_1(0)$, $\varrho(x)=\varrho(-x)$ for all $x\in \Rr^d$, and $\int_{\Rr^d}\varrho(x)\,dx=1$} \] and let us denote $\varrho^\varepsilon\stackrel{\rm def}{=}\frac{1}{\varepsilon^d}\varrho(\frac{x}{\varepsilon})$. Let $V$ be an open set in $\Rr^d$, and consider, for any $\eps>0$, the set
$V^\eps\stackrel{\rm def}{=}\{x\in V: \textnormal{dist}(x, \Rr^d\setminus V)>\eps\}$. For any $f\in L^1_{loc}(V)$ and any $\eps>0$ such that $V^\eps$ is not empty, consider in $V^\eps$ the function
 \begin{align} f^\varepsilon(x)&\stackrel{\rm def}{=}f* \varrho^\varepsilon(x)\label{not}\\&=\int_V \varrho^\varepsilon(x-y)f(y)\,dy\nonumber\\&=\int_{B_1(0)}\varrho(z)f(x-\eps z)\,dz\qquad\text{for all }x\in V^\eps.\nonumber\end{align}
 For any $f,g\in L^2_{loc}(V)$, we also introduce in $V^\eps$ the function
\begin{equation} r^\eps(f,g)(x)\stackrel{\rm def}{=}\int_{B_1(0)}\varrho(z)(f(x-\eps z)-f(x))(g(x-\eps z)-g(x))\,dz\quad\text{for all }x\in V^\eps.\label{eq:r}
\end{equation}
We further denote
\begin{equation}
R^\eps(f,g)\stackrel{\rm def}{=}r^\eps (f,g)-(f-f^\varepsilon)(g-g^\varepsilon)\quad\text{in }V^\eps.
\label{eq:R}
\end{equation}
Then one can easily check that we have, at every point in $V^\eps$,
 \begin{equation}
 (fg)^\varepsilon=f^\varepsilon g^\varepsilon+R^\eps (f,g).\label{care}
 \end{equation}
 (It may be worth pointing out that, while it is not immediately clear from (\ref{eq:R}) that $R^\eps(f,g)$ is a smooth function in $V^\eps$, this smoothness becomes obvious from (\ref{care}).)

\begin{lemma}
Let $V$ be an open set in $\Rr^d$, and $f,g\in C^{0,\alpha}_{\textnormal{loc}}(V)$. Let $K$ be a compact subset of $V$, and let $\eps_0\stackrel{\rm def}{=}\textnormal{dist}(K,\Rr^d\setminus V)/2$ and $K_{0}\stackrel{\rm def}{=}\{x\in \Rr^d: \textnormal{dist}(x,K)\leq \eps_0\}$.
Then there exists a constant $C$ such that, in the notation (\ref{not}), the following estimates hold for all $\eps\in (0,\eps_0)$:
 \begin{itemize}
 \item[(i)] $$\|f^\varepsilon-f\|_{C^0(K)}\le C\varepsilon^\alpha \|f\|_{C^{0,\alpha}(K_{0})},$$
 \item[(ii)] $$\|\nabla f^\varepsilon\|_{C^0(K)}\le C\varepsilon^{\alpha-1}\|f\|_{C^{0,\alpha}(K_{0})},$$
 \item[(iii)] $$\|R^\eps(f,g)\|_{C^0(K)}\le C \eps^{2\alpha}\|f\|_{C^{0,\alpha}(\mathbb{R}^d)}\|g\|_{C^{0,\alpha}(K_{0})}.$$
 \end{itemize}
 \label{lemma:approx}
\end{lemma}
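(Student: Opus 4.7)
The three estimates are the standard facts about mollification of Hölder functions, and I would prove them in order, exploiting that for $x\in K$, $z\in B_1(0)$ and $\varepsilon<\varepsilon_0$ the point $x-\varepsilon z$ lies in the enlargement $K_0\subset V$, so that the Hölder seminorm of $f$ (and of $g$) is available pointwise where needed. Throughout I will write $[f]_\alpha=\|f\|_{C^{0,\alpha}(K_0)}$ and use the normalizations $\int\varrho=1$ and $\mathrm{supp}\,\varrho\subset B_1(0)$.

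For part (i), I would start from the last form of $f^\varepsilon$ in (\ref{not}) and subtract $f(x)=\int_{B_1(0)}\varrho(z)f(x)\,dz$, obtaining
\[
f^\varepsilon(x)-f(x)=\int_{B_1(0)}\varrho(z)\bigl(f(x-\varepsilon z)-f(x)\bigr)\,dz.
\]
Since $|f(x-\varepsilon z)-f(x)|\le[f]_\alpha(\varepsilon|z|)^\alpha\le[f]_\alpha\varepsilon^\alpha$ on the support of $\varrho$, and $\int\varrho=1$, the first bound follows.

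For part (ii), differentiating under the integral gives $\nabla f^\varepsilon(x)=\int\nabla_x\varrho^\varepsilon(x-y)f(y)\,dy$. The key trick is that $\int\nabla_x\varrho^\varepsilon(x-y)\,dy=0$ (because $\varrho^\varepsilon$ is compactly supported), so one may insert a $-f(x)$ inside the integrand for free and then change variables $y=x-\varepsilon z$ to land on
\[
\nabla f^\varepsilon(x)=\varepsilon^{-1}\int_{B_1(0)}\nabla\varrho(z)\bigl(f(x-\varepsilon z)-f(x)\bigr)\,dz.
\]
Applying the Hölder bound as in (i) produces the factor $\varepsilon^\alpha$, and the extra $\varepsilon^{-1}$ accounts for the loss of one derivative, yielding $\|\nabla f^\varepsilon\|_{C^0(K)}\le C\varepsilon^{\alpha-1}[f]_\alpha$ with $C=\|\nabla\varrho\|_{L^1}$.

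For part (iii) I would bound the two pieces in the decomposition (\ref{eq:R}) separately. From the definition (\ref{eq:r}),
\[
|r^\varepsilon(f,g)(x)|\le\int_{B_1(0)}\varrho(z)\,[f]_\alpha(\varepsilon|z|)^\alpha[g]_\alpha(\varepsilon|z|)^\alpha\,dz\le[f]_\alpha[g]_\alpha\,\varepsilon^{2\alpha},
\]
while by part (i) applied to both $f$ and $g$,
\[
\bigl|(f-f^\varepsilon)(g-g^\varepsilon)(x)\bigr|\le C^2\varepsilon^{2\alpha}\,[f]_\alpha[g]_\alpha.
\]
Adding these gives the stated bound on $R^\varepsilon(f,g)$.

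There is no real obstacle here; the only subtle step is the cancellation $\int\nabla\varrho^\varepsilon=0$ used in (ii), which turns a naive $\varepsilon^{-1}\|f\|_{C^0}$ bound into the correct $\varepsilon^{\alpha-1}[f]_\alpha$ and is precisely what makes $f^\varepsilon$ a useful approximation of a merely Hölder function. The gain in (iii) is a consequence of the same cancellation philosophy: the product $fg$ is closer to $f^\varepsilon g^\varepsilon$ than one might expect from a term-by-term bound, and this quadratic-in-$\varepsilon^\alpha$ improvement is exactly what will later allow the commutator remainders to be passed to zero in the regime $\alpha>1/3$, in analogy with \cite{CET}.
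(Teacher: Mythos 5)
Your proof is correct and follows essentially the same route as the paper's: both (i) and (ii) use the identical cancellation trick of inserting $-f(x)$ into the mollified integral (available because $\int\varrho=1$ and $\int\nabla\varrho=0$) to gain the factor $\varepsilon^\alpha$, and (iii) bounds the two pieces of $R^\eps(f,g)$ separately via (i) and the definition of $r^\eps$, which is exactly what the paper means by ``reasoning analogously as in the proof of (i).''
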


\begin{proof}[Proof of Lemma \ref{lemma:approx}] Note first that, for any $\eps\in (0,\eps_0)$, $K$ is a subset of $V^\eps$, so that $f^\eps$, $g^\eps$ and $R^\eps(f,g)$ are well defined and smooth on an open set containing $K$.

(i) For every $x\in K$, we have
\begin{align*}
|f^\varepsilon(x)-f(x)|&=|\int_{B_1(0)} \varrho(z)(f(x-\varepsilon z)-f(x))\,dz|\\
&\le \|f\|_{C^{0,\alpha}(K_0)} \int_{B_1(0)} \varrho(z) \varepsilon^\alpha |z|^\alpha\,dz\\
&\le C\varepsilon^\alpha \|f\|_{C^{0,\alpha}(K_0)}
\end{align*}

 (ii) For every $x\in K$, we have, using the fact that $\varrho$ is compactly supported in $B_1(0)$, that
\begin{align*}
|\nabla f^\varepsilon (x)|&=|\frac{1}{\varepsilon^{d+1}}\int_{V} \nabla\varrho(\frac{x-y}{\varepsilon})f(y)\,dy|\\
&=\frac{1}{\varepsilon}|\int_{B_1(0)} \nabla\varrho(z)f(x-\varepsilon z)\,dz|\\&=\frac{1}{\varepsilon}|\int_{B_1(0)}\nabla\varrho(z)\left(f(x-\varepsilon z)-f(x)\right)\,dz|\\
&\le\frac{1}{\varepsilon} \|f\|_{C^{0,\alpha}(K_0)} \int_{\mathbb{R}^d} |\nabla \varrho (z)| \varepsilon^\alpha |z|^\alpha\,dz\\&\le C\varepsilon^{\alpha-1} \|f\|_{C^{0,\alpha}(K_0)}
\end{align*}

 (iii) Using (\ref{eq:r}) and (\ref{eq:R}) and reasoning analogously as in the proof of (i) we obtain the required estimate.
 \end{proof}

 \begin{lemma} Let $V$ be an open set in $\Rr^d$ and $f_1,...,f_d\in L^1_{\textnormal{loc}}(V)$. We use the notation $\partial_k$, $k\in\{1,...,d\}$, to denote the partial derivative with respect to the $k$th variable, either in the classical sense, or in the sense of distributions. Suppose that
 \be\sum_{k=1}^d\partial_k f_k=0 \quad\text{in the sense of distributions in $V$}.\label{divw}\ee
 Then, for any $\eps>0$ such that $V^\eps$ is non-empty, where $V^\eps\stackrel{\rm def}{=}\{x\in V:\textnormal{dist}(x, \Rr^d\setminus V)>\eps\}$,
 we have:
 \be\sum_{k=1}^d\partial_k f_k^\eps=0 \quad\text{in the classical sense in $V^\eps$}.\label{divc}\ee
 \label{lemma:reg}
 \end{lemma}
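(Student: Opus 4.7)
The plan is to exploit the standard fact that mollification commutes with distributional differentiation, combined with the fact that for $x\in V^\eps$ the shifted mollifier $y\mapsto \varrho^\eps(x-y)$ is a legitimate test function in $V$.

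First I would fix an arbitrary point $x\in V^\eps$ and observe that, by the support condition $\mathrm{supp}\,\varrho\subset B_1(0)$, the function $y\mapsto \varrho^\eps(x-y)$ belongs to $C_0^\infty(V)$, since its support is contained in the closed ball $\overline{B_\eps(x)}\subset V$. This is the only step where the definition of $V^\eps$ is used, and it is what makes the pairing with the distributions $f_k$ and $\partial_k f_k$ meaningful.

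Next, since $f_k^\eps$ is smooth on $V^\eps$, I would compute the classical partial derivative by differentiating under the integral and then transferring the derivative from $x$ to $y$ via the chain rule $\partial_{x_k}\varrho^\eps(x-y)=-\partial_{y_k}\varrho^\eps(x-y)$. This gives
\begin{equation*}
\partial_k f_k^\eps(x)=\int_V f_k(y)\,\partial_{x_k}\varrho^\eps(x-y)\,dy=-\int_V f_k(y)\,\partial_{y_k}\varrho^\eps(x-y)\,dy.
\end{equation*}
The right-hand side is precisely the action of the distribution $\partial_k f_k$ on the test function $y\mapsto \varrho^\eps(x-y)$.

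Summing over $k\in\{1,\dots,d\}$, linearity of the distributional pairing and the hypothesis (\ref{divw}) give
\begin{equation*}
\sum_{k=1}^d\partial_k f_k^\eps(x)=\Big\langle \sum_{k=1}^d\partial_k f_k,\ \varrho^\eps(x-\cdot)\Big\rangle=0,
\end{equation*}
which is (\ref{divc}). There is essentially no obstacle here; the only small point to verify carefully is the support condition placing $\varrho^\eps(x-\cdot)$ in $C_0^\infty(V)$, which is exactly why $V^\eps$ is defined with strict inequality $\mathrm{dist}(x,\Rr^d\setminus V)>\eps$.
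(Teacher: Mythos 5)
Your proof is correct, and it takes a slightly different but dual route to the one in the paper. You fix a point $x\in V^\eps$ and apply the distributional hypothesis directly to the test function $y\mapsto\varrho^\eps(x-y)\in C_0^\infty(V)$, producing the pointwise identity $\sum_k\partial_k f_k^\eps(x)=\langle\sum_k\partial_k f_k,\,\varrho^\eps(x-\cdot)\rangle=0$. The paper instead takes an arbitrary $\varphi\in C_0^\infty(V^\eps)$, notes that $\varphi^\eps=\varphi*\varrho^\eps\in C_0^\infty(V)$, applies the hypothesis to $\varphi^\eps$, and then uses Fubini and integration by parts to recognize the result as $-\int_{V^\eps}\bigl(\sum_k\partial_k f_k^\eps\bigr)\varphi$; since $\varphi$ is arbitrary and $\sum_k\partial_k f_k^\eps$ is smooth, the conclusion follows. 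Your version is a bit more direct: it establishes the classical vanishing pointwise without needing Fubini or a final ``test against all $\varphi$, then infer pointwise'' step, at the price of making explicit that $\partial_{x_k}\varrho^\eps(x-y)=-\partial_{y_k}\varrho^\eps(x-y)$ so that the classical derivative of $f_k^\eps$ matches the distributional pairing. The paper's version has the mild advantage of reusing the same mollification operator $(\cdot)^\eps$ uniformly (on $\varphi$ rather than on a shifted kernel), which fits the notational framework it sets up for the rest of the section, but mathematically the two arguments are transposes of one another.
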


 \begin{proof}
 Fix any $\eps>0$ such that $V^\eps$ is not empty. Let $\varphi\in C^\infty_0(V^\eps)$ be arbitrary, and observe that $\varphi^\eps\in C_0^\infty(V)$,
 where $\vfi^\eps\stackrel{\rm def}=\vfi*\varrho^\eps$ in $\Rr^d$. Assumption (\ref{divw}) implies that
 $$\sum_{k=1}^d \int_{V} f_k(x) \partial_k \varphi^\eps(x)\,dx=0.$$  Using the fact that the regularization operator commutes with differentiation on smooth functions (see for instance \cite[Chapter 3]{MB}), then Fubini's Theorem, and then integration by parts, we obtain
\begin{align*}
0&=\sum_{k=1}^d \int_{V} f_k(x) (\partial_k \varphi)^\varepsilon(x)\,dx\\&=\sum_{k=1}^d \int_{V} f_k(x)\left(\int_{V^\eps}\partial_k\vfi(y)\varrho^\eps(x-y)\,dy\right)\,dx\\
&=\sum_{k=1}^d \int_{V^\eps} \partial_k\vfi(y)\left(\int_{V}f_k(x)\varrho^\eps(y-x)\,dx\right)\,dy\\
&=\sum_{k=1}^d \int_{V^\eps}f_k^\eps(y)\partial_k\vfi(y)\,dy\\
&=- \int_{V^\eps}\left(\sum_{k=1}^d\partial_k f_k^\varepsilon\
(y)\right)\varphi(y)\,dy.
\end{align*}
Since $\varphi\in C^\infty_0(V^\eps)$ was arbitrary, the required conclusion (\ref{divc}) follows.

 \end{proof}

After these preliminaries on regularization, we are now in a position to give the proof of our main result.

\begin{proof}[Proof of Theorem \ref{tmain}]

We prove first the equivalence of (ii) and (iii) which, as we shall see, is valid for any $\alpha\in (0,1]$.

Suppose that (ii) holds. Let $\psi\in C^{1,\alpha}_{\textnormal{per}}(\overline{D_\eta})$ be such that (\ref{weq:streamform}) and (\ref{aq}) hold, where $\Gamma\in C^{0,\alpha}([p_0,0])$.
Defining $h$ as in Section 2, we then have that $h\in C^{1,\alpha}_{\textnormal{per}}(\overline R)$, and the formulae (\ref{ch1})--(\ref{ch2}) are still valid.
(We regard (\ref{ch2}) as a relation between the classical derivatives of any $C^1$ function with respect to the $(x,y)$ variables and those with respect to the
$(q,p)$ variables, and do not assign to it any meaning in the sense of distributions.) Clearly (\ref{p1})--(\ref{p3}) imply (\ref{w2})--(\ref{w3}), and (\ref{aq}) implies (\ref{eq:nondegheight}). Let us now write explicitly the weak form of (\ref{w1}), which we need to prove: for any $\tvfi\in C_0^1(R)$,
\begin{equation}\int_R \left(-\frac{1+h_q^2}{2 h_p^2}+\Gamma(p)\right)\tvfi_p+\frac{h_q}{h_p}\tvfi_q\,dqdp=0.\label{wehet} \end{equation}
For any such $\tvfi$, let $\vfi\in C^1_0(D_\eta)$ be given by $\vfi(x,y)=\tvfi(x,-\psi(x,y))$ for all $(x,y)\in D_\eta$. By changing variables in the integral, using (\ref{ch1})--(\ref{ch2}),
one can rewrite (\ref{wehet}) as
 \be \int_{D_\eta} \Gamma(-\psi) \vfi_y- (\psi_x\psi_y)\vfi_x+\frac{1}{2}(\psi_x^2-\psi_y^2)\vfi_y\,dxdy=0.\label{wdc}\ee
 But (\ref{wdc}) is valid, as a consequence of (\ref{ioq}). This shows that (\ref{w1}) holds. We have thus proved that (iii) holds.

 Suppose now that (iii) holds. Let $h\in C^{1,\alpha}_{\textnormal{per}}(\overline R)$ be such that
 (\ref{wehe}) and (\ref{eq:nondegheight}) hold, where $\Gamma\in C^{0,\alpha}([p_0,0])$. Defining $\eta$ and $\psi$ as in Section 2, we then have that $\eta\in C^{1,\alpha}_{\textnormal{per}}(\Rr)$ and $\psi\in C^{1,\alpha}_{\textnormal{per}}(\overline{D_\eta})$, and the formulae (\ref{ch1})--(\ref{ch2}) are still valid.  Clearly (\ref{w2})--(\ref{w3}) imply (\ref{p1})--(\ref{p3}), and  (\ref{eq:nondegheight}) implies (\ref{aq}). The weak form of (\ref{ioq}), which we need to prove,  is written explicitly as (\ref{wdc}), for any $\vfi\in C^1_0(D_\eta)$. For any such $\vfi$, let $\tvfi\in C^1_0(R)$ be given by $\tvfi(q,p)=\vfi(q, h(q,p))$ for all $(q,p)\in R$. By changing variables in the integral, using (\ref{ch1})--(\ref{ch2}),
one can rewrite (\ref{wdc}) as (\ref{wehet}). But (\ref{wehet}) is valid, as a consequence of (\ref{w1}). This shows that (\ref{ioq}) holds. We have thus proved that (ii) holds.

We now prove the equivalence of (i) and (ii), making essential use of the assumption $\alpha>1/3$.

Suppose that (i) holds. Since $\eta\in C^{1,\alpha}_{\textnormal{per}}(\Rr)$ and $u,v\in C^{0,\alpha}_{\textnormal{per}}(\overline D_\eta)$, it follows from (\ref{weq:masscons}), by arguments similar to those in \cite[Lemma 3]{BBM}, in which our Lemma 2 plays a key role, that there exists $\psi\in C^{1,\alpha}_{\textnormal{per}}(\overline{D_\eta})$, uniquely determined up to an additive constant, such that (\ref{transform:stream}) holds. Clearly, (\ref{ven}) implies (\ref{aq}).
 Also, it follows from (\ref{wvb}) and (\ref{weq:velbdry}) that $\psi$ is constant on each of $y=0$ and $y=\eta(x)$. The additive constant in the definition of $\psi$ may be chosen so that (\ref{p2}) holds, and then (\ref{p1}) also holds for some constant $p_0<0$.
Using the definition of $\psi$ we rewrite (\ref{weq:momcons1})--(\ref{weq:momcons2}) in the weak distributional form (with $\psi_x, \psi_y$ in the classical sense):
\begin{subequations}\label{eq:weakstream}
 \begin{align}
 (\psi_y^2)_x-(\psi_x\psi_y)_y&=-P_x\qquad\qquad\qquad\text{in }D_\eta,\\
 -(\psi_x\psi_y)_x+(\psi_x^2)_y&=-P_y-g\qquad\qquad\,\,\text{in }D_\eta.
 \end{align}
 \end{subequations}
 Let us denote
\begin{equation}
F\stackrel{\rm def}{=}P+\frac{1}{2}|\nabla \psi|^2+gy\quad\text{in }D_\eta.
\label{eq:f}
\end{equation}
It follows from
(\ref{eq:weakstream}) that we have, in the sense of distributions (with $\psi_x, \psi_y$ in the classical sense):
\begin{subequations} \label{eqf}
\begin{align}
F_x&=\frac{1}{2}(\psi_x^2-\psi_y^2)_x+(\psi_x\psi_y)_y\quad\text{in }D_\eta,\\
F_y&=(\psi_x\psi_y)_x-\frac{1}{2}(\psi_x^2-\psi_y^2)_y\quad\text{in }D_\eta.
 \end{align}
 \end{subequations}
We now show that there exists a function $\Gamma\in C^{0,\alpha}([p_0,0])$ such that
  \be F(x,y)=\Gamma(-\psi(x,y))\quad\text{for all }(x,y)\in D_\eta.\label{raa}\ee
Let us consider again the transformations (\ref{map1})--(\ref{inverse}), and note that (\ref{ch1})--(\ref{ch2}) are valid under the present regularity assumptions.
Let $\tilde F:\overline R\to \Rr$ be given by $\tilde F(q,p)=F(q, h(q,p))$ in $\overline R$, which is equivalent to
$F(x,y)=\tilde F(x,-\psi(x,y))$ in $\overline{D_\eta}$. Then our desired conclusion (\ref{raa}) is that
\be \tilde F(q,p)=\Gamma(p)\quad\text{for all }(q,p)\in R,\label{qaa}\ee
for some $\Gamma\in C^{0,\alpha}([p_0,0])$. To this aim, we shall prove that, for any $\tvfi\in C^1_0(R)$,
  \be\int_R \tilde F\tilde\varphi_q\,dqdp=0,\label{wsa}\ee
  which, together with the condition $\tilde F\in C^{0,\alpha}_{\textnormal{per}}(\overline R)$, will imply (\ref{qaa}) for some function $\Gamma\in  C^{0,\alpha}([p_0,0])$.
 For any such $\tvfi$, let $\vfi\in C^1_0(D_\eta)$ be given by $\vfi(x,y)=\tvfi(x,-\psi(x,y))$ for all $(x,y)\in D_\eta$. By changing variables in the integral, using (\ref{ch1}) and (\ref{ch2}), (\ref{wsa}) can be rewritten as
  \begin{equation}
\int_{D_\eta} F\left(\psi_y\vfi_x -\psi_x\varphi_y\right)\,dxdy=0
\label{eq:conclusion}
\end{equation}  Thus our aim is to prove (\ref{eq:conclusion}) for any $\vfi\in C^1_0(D_\eta)$. Note for later reference that this statement can be written in the sense of distributions (with $\psi_x,\psi_y$ in the classical sense) as
\be (F\psi_y)_x-(F\psi_x)_y=0\quad\text{in }D_\eta.\label{nei}\ee
Note also that, for any function $\theta$ of class $C^2$, a direct calculation shows the identities
\begin{subequations}\label{idtheta}
 \begin{align}\frac{1}{2}(\theta_x^2-\theta_y^2)_x+(\theta_x\theta_y)_y&=\theta_x\Delta \theta,\label{idtheta1} \\(\theta_x\theta_y)_x-
 \frac{1}{2}(\theta_x^2-\theta_y^2)_y&=\theta_y\Delta\theta.\label{idtheta2}
 \end{align}
 \end{subequations}
  (The expressions in the left-hand side of (\ref{idtheta}) are similar to those occurring in the right-hand side of (\ref{eqf}), however (\ref{idtheta}) cannot be applied with $\theta:=\psi$, since $\psi$ is not of class $C^2$ in $D_\eta$.)
  Let $V\stackrel{\rm def}{=}D_\eta$ and let, for any $\eps>0$, let  $V^\eps\stackrel{\rm def}{=}\{(x,y)\in V: \textnormal{dist}((x,y), \Rr^2\setminus V)>\eps\}$.
  Using Lemma~\ref{lemma:reg} the system (\ref{eqf}) implies that, for any $\eps>0$ such that $V^\eps$ is not empty, in the notation (\ref{not}),
 \begin{align*} F_x^\eps&=\frac{1}{2}((\psi_x^\eps)^2-(\psi_y^\eps)^2)_x+(\psi_x^\eps\psi_y^\eps)_y+\frac{1}{2}R^\eps(\psi_x,\psi_x)_x
 -\frac{1}{2}R^\eps(\psi_y,\psi_y)_x+R^\eps(\psi_x,\psi_y)_y\quad\text{in }V^\eps,
 \\
F_y^\eps&=(\psi_x^\eps\psi_y^\eps)_x-\frac{1}{2}((\psi_x^\eps)^2-(\psi_y^\eps)^2)_y+R^\eps(\psi_x,\psi_y)_x-\frac{1}{2}R^\eps(\psi_x,\psi_x)_y
 +\frac{1}{2}R^\eps(\psi_y,\psi_y)_y\quad\text{in }V^\eps.
\end{align*}
 Using the identities (\ref{idtheta}) with $\theta:=\psi^\eps$, the above can be rewritten as
\begin{subequations}\label{nele}
\begin{align} F_x^\eps&=\psi^\eps_x\Delta \psi^\eps+\frac{1}{2}R^\eps(\psi_x,\psi_x)_x
 -\frac{1}{2}R^\eps(\psi_y,\psi_y)_x+R^\eps(\psi_x,\psi_y)_y\quad\text{in }V^\eps,
 \\
F_y^\eps&=\psi^\eps_y\Delta\psi^\eps+R^\eps(\psi_x,\psi_y)_x-\frac{1}{2}R^\eps(\psi_x,\psi_x)_y
 +\frac{1}{2}R^\eps(\psi_y,\psi_y)_y\quad\text{in }V^\eps.
\end{align}
\end{subequations}
Let $\vfi\in C^1_0(D_\eta)$, arbitrary, and let $K\stackrel{\rm def}{=}\textnormal{supp}\, \vfi$. Let $\eps_0\stackrel{\rm def}{=}\textnormal{dist}(K,\Rr^2\setminus V)/2$ and $K_{0}\stackrel{\rm def}{=}\{(x,y)\in \Rr^2: \textnormal{dist}((x,y),K)\leq \eps_0\}$. Note that $K$ is a subset of $V^\eps$, for any $\eps\in (0,\eps_0)$. Aiming to prove (\ref{eq:conclusion}) for $\vfi$, we write, for any $\eps\in (0,\eps_0)$,
\begin{align*}&\int_{D_\eta} F\left(\psi_y\vfi_x -\psi_x\varphi_y\right)\,dxdy\\&=\int_{K} (F\psi_y-F^\eps\psi_y^\eps)\vfi_x -(F\psi_x-F^\eps\psi_x^\eps)\varphi_y\,dxdy+\int_{K} F^\eps\psi_y^\eps\vfi_x -F^\eps\psi_x^\eps\varphi_y\,dxdy\\&\stackrel{\rm def}{=}\mathcal{I}_\eps+\mathcal{J}_\eps.
\end{align*}
It is a consequence of Lemma 1(i) that $\mathcal{I}_\eps\to 0$ as $\eps\to 0$.
 To estimate $\mathcal{J}_\varepsilon$, we first integrate by parts, then use (\ref{nele}) to cancel some terms, and then integrate by parts again, to get
\begin{align*}
\mathcal{J}_\varepsilon&=\int_{K}(F^\varepsilon_x\psi^\varepsilon_y-F^\varepsilon_y\psi^\varepsilon_x)\varphi\,dxdy\\
&=-\int_{K}\left[\frac{1}{2}R^\eps(\psi_x,\psi_x)_x
 \frac{1}{2}R^\eps(\psi_y,\psi_y)_x+R^\eps(\psi_x,\psi_y)_y\right](\psi^\eps_y\vfi)\,dxdy\\
 &\quad+\int_{K} \left[R^\eps(\psi_x,\psi_y)_x-\frac{1}{2}R^\eps(\psi_x,\psi_x)_y
 +\frac{1}{2}R^\eps(\psi_y,\psi_y)_y\right](\psi^\eps_x\vfi)\,dxdy\\
 &=\int_{K} \left[\frac{1}{2}R^\eps(\psi_x,\psi_x)-\frac{1}{2}R^\eps(\psi_y,\psi_y)\right][(\psi^\eps_y\vfi)_x+(\psi^\eps_x\vfi)_y]\,dxdy\\&\quad+\int_{K} R^\eps(\psi_x,\psi_y)[(\psi^\eps_y\vfi)_y-(\psi^\eps_x\vfi)_x]\,dxdy.
\end{align*}
Expanding the square brackets, we write $\mathcal{J}_\eps$ as a sum of six terms, all of which can be estimated in a similar way, by using Lemma 1, to the one shown below:
\begin{align*}
|\int_{K} R^\varepsilon(\psi_x,\psi_y)(\psi_{x}^\varepsilon\vfi)_x\,dx|=|\int_{K} R^\varepsilon(\psi_x,\psi_y)\big(\psi_{x}^\varepsilon\vfi_x+ \psi_{xy}^\eps\vfi\big)\,dx|\\
\le C(\eps^{2\alpha} \|\psi_x\|^2_{C^{0,\alpha}(K_0)}\|\psi_y\|_{C^{0,\alpha}(K_0)}+\eps^{3\alpha-1}\|\psi_x\|_{C^{0,\alpha}(K_0)}^3),
 \end{align*} where $C$ is a constant which depends on $\vfi$, but is independent of $\eps\in (0,\eps_0)$.
 The assumption $\alpha>1/3$ now implies that $\mathcal{J}_\eps\to 0$ as $\eps\to 0$. We have thus proved that (\ref{eq:conclusion}) holds for any $\vfi\in C^1_0(D_\eta)$. As discussed earlier, this implies the existence of $\Gamma\in C^{0,\alpha}([p_0,0])$ such that (\ref{raa}) holds. It therefore follows from (\ref{eqf}) that, in the sense of distributions (with $\psi_x,\psi_y$ in the classical sense),
\begin{subequations} \label{equa}
\begin{align}
\Gamma(-\psi)_x&=\frac{1}{2}(\psi_x^2-\psi_y^2)_x+(\psi_x\psi_y)_y\quad\text{in } D_\eta,\label{equa1}\\
\Gamma(-\psi)_y&=(\psi_x\psi_y)_x-\frac{1}{2}(\psi_x^2-\psi_y^2)_y \quad\text{in } D_\eta.\label{equa2}
 \end{align}
 \end{subequations}
 Rearranging (\ref{equa2}) gives exactly (\ref{ioq}). Also, recalling (\ref{eq:f}), we obtain from (\ref{wpres}) the validity of (\ref{p3}) for some constant $Q$. We have thus proved that (ii) holds.

Suppose now that (ii) holds. We define in $D_\eta$ the velocity
$(u,v)$ by (\ref{transform:stream}) and, up to an additive
constant, the pressure $P$, by \be P\stackrel{\rm
def}{=}-\frac{1}{2}|\nabla\psi|^2-gy+\Gamma(-\psi)\quad\text{in }D_\eta.\label{fre}\ee
Then $u,v, P\in C^{0,\alpha}_{\textnormal{per}}(\overline{D_\eta})$. Moreover,
the definition of $u$ and $v$ implies (\ref{weq:masscons}), while
(\ref{p1})--(\ref{p3}) imply (\ref{wvb})--(\ref{wpres}), provided the additive constant in the definition of $P$ is chosen in a suitable way.
 Also,
(\ref{aq}) implies (\ref{ven}). It therefore remains to prove the
validity of (\ref{weq:momcons1})--(\ref{weq:momcons2}). Using our
definition of $u,v$ and $P$, (\ref{weq:momcons1})--(\ref{weq:momcons2}) can be equivalently rewritten
as (\ref{equa}). However, (\ref{equa2}) is exactly (\ref{ioq}),
which we are assuming to hold, and therefore it only remains to
prove (\ref{equa1}). We now show that (\ref{equa2}) implies
(\ref{equa1}). For notational convenience, we denote
$F\stackrel{\rm def}{=}\Gamma(-\psi)$. We claim that, with this
definition of $F$, (\ref{nei}) necessarily holds. Indeed,
(\ref{nei}) can be written explicitly as (\ref{eq:conclusion}) for any
$\vfi\in C^1_0(D_\eta)$, which, using the same notation as earlier
in the proof, is equivalent to (\ref{wsa}) for any $\tvfi\in
C^1_0(R)$, which is clearly true with our definition of $F$. Let $V\stackrel{\rm def}{=}D_\eta$ and let, for any $\eps>0$, let  $V^\eps\stackrel{\rm def}{=}\{(x,y)\in V: \textnormal{dist}((x,y), \Rr^2\setminus V)>\eps\}$.
Using Lemma~\ref{lemma:reg}, (\ref{nei}) implies that, for any $\eps>0$ such that $V^\eps$ is not empty, in the notation (\ref{not}),
\be (F^\eps\psi_y^\eps)_x-(F^\eps\psi_x^\eps)_y+ R^\eps(F,\psi_y)_x-R^\eps(F,\psi_x)_y=0\quad\text{in }V^\eps.\label{and}\ee
Let $K$ be any compact subset of $V$. It follows from (\ref{aq}) that, for all $\eps$ sufficiently small,
\[\psi^\eps_y<0\quad\text{in $K$}.\]
For such values of $\eps$, (\ref{and}) can be rewritten as
\be F^\eps_x=\frac{\psi^\eps_x}{\psi^\eps_y}F^\eps_y-\frac{1}{\psi^\eps_y}[R^\eps(F,\psi_y)_x-R^\eps(F,\psi_x)_y]\quad\text{in }K.\label{iu}\ee
 Using again Lemma~\ref{lemma:reg} and (\ref{equa2}), we obtain, by using (\ref{idtheta2}) with $\theta:=\psi^\eps$, that, for any $\eps>0$ such that $V^\eps$ is not empty,
\be F^\eps_y=\psi^\eps_y\Delta\psi^\eps+R^\eps(\psi_x,\psi_y)_x-\frac{1}{2}R^\eps(\psi_x,\psi_x)_y
 +\frac{1}{2}R^\eps(\psi_y,\psi_y)_y\label{lo}\quad\text{in }V^\eps.\ee
 We deduce from (\ref{lo}) using (\ref{iu}) that, for all $\eps$ sufficiently small,
 \begin{align} F^\eps_x=\psi^\eps_x \Delta \psi^\eps&+\frac{\psi^\eps_x}{\psi^\eps_y}\left[R^\eps(\psi_x,\psi_y)_x-\frac{1}{2}R^\eps(\psi_x,\psi_x)_y
 +\frac{1}{2}R^\eps(\psi_y,\psi_y)_y\right]\nonumber\\&-\frac{1}{\psi^\eps_y}[R^\eps(F,\psi_y)_x-R^\eps(F,\psi_x)_y]\quad\text{in }K.\label{lala}\end{align}
We now write explicitly the weak form of (\ref{equa1}), which we want to prove: for any $\vfi\in C_0^1 (D_\eta)$,
\be\int_{D_\eta} F\vfi_x-\frac{1}{2}(\psi_x^2-\psi_y^2)\vfi_x-(\psi_x\psi_y)\vfi_y\,dxdy=0.\label{tgb}\ee
Let $\vfi\in C_0^1 (D_\eta)$, arbitrary, and let $K\stackrel{\rm def}=\textnormal{supp}\,\vfi$. Let $\eps_0\stackrel{\rm def}{=}\textnormal{dist}(K,\Rr^2\setminus V)/2$ and $K_{0}\stackrel{\rm def}{=}\{(x,y)\in \Rr^2: \textnormal{dist}((x,y),K)\leq \eps_0\}$. Note that $K$ is a subset of $V^\eps$, for any $\eps\in (0,\eps_0)$. Aiming to prove (\ref{tgb}) for $\vfi$, we write, for any $\eps\in (0,\eps_0)$,
\begin{align*}
&\int_{D_\eta} F\vfi_x-\frac{1}{2}(\psi_x^2-\psi_y^2)\vfi_x-(\psi_x\psi_y)\vfi_y\,dxdy\\&=
\int_{K} [F-F^\eps]\vfi_x-\left[\frac{1}{2}(\psi_x^2-\psi_y^2)-\frac{1}{2}((\psi^\eps_x)^2-(\psi^\eps_y)^2)\right]\vfi_x-[(\psi_x\psi_y)-(\psi_x^\eps\psi_y^\eps)]
\vfi_y\,dxdy\\&\quad+\int_{K} F^\eps\vfi_x-\frac{1}{2}((\psi^\eps_x)^2-(\psi^\eps_y)^2)\vfi_x-(\psi_x^\eps\psi_y^\eps)\vfi_y\,dxdy\\
&\stackrel{\rm def}{=}\mathcal{K}_\eps+\mathcal{L}_\eps.
\end{align*}
It is a consequence of Lemma 1(i) that $\mathcal{K}_\eps\to 0$ as $\eps\to 0$. Now note that (\ref{aq}) implies that there exists $\tilde\eps\in (0,\eps_0)$ and $\delta>0$ such that, for all $\eps\in (0,\tilde\eps)$,
\be\psi_y^\eps\geq\delta \quad\text{in $K$}.\label{sra}\ee
  To estimate $\mathcal{L}_\eps$,  we first integrate by parts using (\ref{idtheta1}) with $\theta:=\psi^\eps$, then use (\ref{lala}) to cancel some terms, and then integrate by parts again, to obtain, for any $\eps\in (0,\tilde\eps)$,
\begin{align*} \mathcal{L}_\eps&=-\int_{K} (F^\eps_x-\psi^\eps_x\Delta\psi^\eps)\vfi\\&=-\int_{K}\frac{\psi^\eps_x}{\psi^\eps_y}\left[R^\eps(\psi_x,\psi_y)_x-\frac{1}{2}R^\eps(\psi_x,\psi_x)_y
 +\frac{1}{2}R^\eps(\psi_y,\psi_y)_y\right]\vfi\,dxdy\\&\quad+\int_{K}\frac{1}{\psi^\eps_y}\left[R^\eps(F,\psi_y)_x-R^\eps(F,\psi_x)_y\right]\vfi\,dxdy\\
 &=\int_{K} R^\eps(\psi_x,\psi_y)\left(\frac{\psi^\eps_x}{\psi^\eps_y}\vfi\right)_x
 -\frac{1}{2}R^\eps(\psi_x,\psi_x)\left(\frac{\psi^\eps_x}{\psi^\eps_y}\vfi\right)_y- \frac{1}{2}R^\eps(\psi_y,\psi_y)\left(\frac{\psi^\eps_x}{\psi^\eps_y}\vfi\right)_y\,dxdy\\
&\quad -\int_{K}R^\eps(F,\psi_y)\left(\frac{1}{\psi^\eps_y}\vfi\right)_x-R^\eps(F,\psi_x)\left(\frac{1}{\psi^\eps_y}\vfi\right)_y\,dxdy.\end{align*}
Thus we have written $\mathcal{L}_\eps$ as a sum of five terms, all of which can be estimated in a similar way, by using Lemma 1, to the one shown below:
\begin{align*}
|\int_{K} R^\eps(\psi_x,\psi_y)\left(\frac{\psi^\eps_x}{\psi^\eps_y}\vfi\right)_x\,dxdy|=|\int_{K} R^\eps(\psi_x,\psi_y)\left(\frac{\psi^\eps_x\psi^\eps_y}{(\psi^\eps_y)^2}\vfi_x+\frac{\psi^\eps_{xx}\psi^\eps_y
-\psi^\eps_x\psi^\eps_{xy}}{(\psi^\eps_y)^2}\vfi\right)\,dxdy|&\\
\leq C(\eps^{2\alpha}||\psi_x||^2_{C^{0,\alpha}(K_0)}||\psi_y||^2_{C^{0,\alpha}(K_0)}+\eps^{3\alpha-1}||\psi_x||_{C^{0,\alpha}(K_0)}^2||\psi_y||^2_{C^{0,\alpha}(K_0)}),&
\end{align*}
 where $C$ is a constant which depends on $\vfi$, but is independent of $\eps\in (0, \tilde \eps)$, and we have also used (\ref{sra}).
 The assumption $\alpha>1/3$ now implies that $\mathcal{L}_\eps\to 0$ as $\eps\to 0$. We have thus proved that (\ref{tgb}) holds for any $\vfi\in C^1_0(D_\eta)$, and therefore that (\ref{equa1}) holds. This completes the proof that (i) holds.
\end{proof}


\begin{thebibliography}{a}
\bibitem{BBM}  J. Bourgain, H. Brezis, and P. Mironescu, ``Lifting in Sobolev spaces", J. Anal. Math. 80 (2000), 37--86

\bibitem{csold} A. Constantin and W. Strauss, ``Exact steady periodic water waves with vorticity", Comm. Pure Appl. Math. 57 (2004), 481--527
\bibitem{cs} A. Constantin and W. Strauss, ``Periodic traveling gravity water waves with discontinuous vorticity", to appear in Arch. Rational Mech. Anal.
\bibitem{CV} A. Constantin and E. Varvaruca, ``Steady periodic water waves with constant vorticity: regularity and local bifurcation", Arch. Rational Mech. Anal. 199 (2011), 33--67
\bibitem{CET} P. Constantin, W. E, and E. Titi, ``Onsager's conjecture on the energy conservation for
solutions of Euler's equation", Commun. Math. Phys. 165 (1994), 207--209
\bibitem{EEW} M. Ehrnstr\"{o}m, J. Escher, and E. Wahl\'{e}n, ``Steady water waves with multiple critical layers", preprint 2010
\bibitem{Hur} V. M. Hur, ``Exact solitary water waves with vorticity", Arch. Rational Mech. Anal. 188 (2008), 213--244
\bibitem{MB} A. Majda and A. Bertozzi, ``Vorticity and incompressible flow", Cambridge University Press, Cambridge, 2002
\bibitem{ST} E. Shargorodsky and J. F. Toland, ``Bernoulli free-boundary
problems", Mem. Amer. Math. Soc. 196 (2008), no. 914
\bibitem{S} W. Strauss, ``Steady water waves", Bull. Amer. Math. Soc. 47 (2010), 671--694

\end{thebibliography}
\end{document}